\documentclass[a5paper, 10pt]{article}

\usepackage{cite, amsmath, amssymb, booktabs, lastpage, graphicx}
\usepackage[hidelinks]{hyperref}

\usepackage{geometry}
\usepackage{setspace}
 \usepackage{cite, amsmath, amssymb, amsthm}
    \usepackage[mathscr]{euscript}
    \usepackage{graphicx}
    \usepackage[margin=1cm,%
                font=small,%
                format=hang,%
                labelsep=period,%
                labelfont=bf]{caption}

\usepackage{amsthm}

\theoremstyle{plain}

\newtheorem{theorem}{Theorem}
\newtheorem{conjecture}{Conjecture}
\newtheorem{proposition}[theorem]{Proposition}
\newtheorem{corollary}{Corollary}

\theoremstyle{remark}

\theoremstyle{definition}

\newtheorem{example}{Example}

\usepackage{graphicx}
\makeatletter
\renewcommand{\maketitle}{
        \begin{center}

                {\Large\bfseries \@title} \par
                \vspace{5mm}
                \baselineskip=0.2in
                {\large\bfseries \@author}\par
                \vspace{1mm}
                {\it \@address} \par
                {\small\tt \@email} \par
                \vspace{3mm}
                
        \end{center}
        \vspace{3mm}
}
\newcommand{\address}[1]{\def\@address{#1}}
\newcommand{\email}[1]{\def\@email{#1}}

\makeatother

\usepackage{fancyhdr}

\usepackage[margin=1cm,%
                        font=footnotesize,%
                        format=hang,%
                        labelsep=period,%
                        labelfont=bf]{caption}

\title{Locally Equienergetic Graphs}
\author{Cahit Dede$^{a,}$, 
        Kalpesh M. Popat$^b$}

\address{$^a$Department of Mathematics, Selçuk University, Konya, Türkiye\\
        $^b$Department of Mathematics, Saurashtra University, Rajkot, India
    }

\email{cahitdede@gmail.com, kalpeshmpopat@gmail.com}

\newcommand{\boxtensor}{{\Box\keCahitrn-9.03pt\raise1.42pt\hbox{$\times$}}}

\newcommand{\be}{\begin{eqnarray}}
\newcommand{\ee}{\end{eqnarray}}

\newcommand{\dd}{\displaystyle}

\begin{document}

\maketitle

\begin{abstract}
For a given graph \( G \), let \( G^{(j)} \) denote the graph obtained by the deletion of vertex \( v_j \) from \( G \). The difference \( \mathscr{E}(G) - \mathscr{E}(G^{(j)}) \) quantifies the change in the energy of \( G \) upon the removal of \( v_j \), termed as the local energy of \( G \) at vertex $v_j$, as defined by Espinal and Rada in 2024. The local energy of $G$ at vertex $v$ is denoted by \(\mathscr{E}_G(v)\). The local energy of the graph \( G \), therefore, is the summation of these vertex-specific local energies across all vertices in \( V(G) \), expressed by \( e(G) = \sum \mathscr{E}_G(v) \). Two graphs of the same order are defined as locally equienergetic if they have identical local energy. In this paper, we have investigated several pairs of  locally equienergetic graphs.
\end{abstract}

\onehalfspacing

\section{Introduction}  
Let \( G \) be a graph with vertex set \( \{v_1, v_2, \ldots, v_n\} \). If there is an edge between two vertices of \( G \), we say they are adjacent. The adjacency matrix \( A(G) \) of \( G \) is defined such that \( a_{ij} \) equals 1 if vertices \( v_i \) and \( v_j \) are adjacent, and 0 otherwise, for \( i, j = 1, 2, \ldots, n \). The eigenvalues of \( A(G) \) are called the eigenvalues of \( G \). The energy \( \mathscr{E}(G) \) of the graph \( G \) is defined as the sum of the absolute values of its eigenvalues. The concept of energy was introduced by Gutman \cite{gut} in 1978. A more in-depth understanding of graph energy is provided in \cite{bap,cve,li,vai}.

Let \( G^{(j)} \) be the graph obtained from \( G \) by deleting vertex \( v_j \). The quantity
\[
\mathscr{E}(G) - \mathscr{E}(G^{(j)})
\]
represents the variation in the energy of \( G \) when vertex \( v_j \) is deleted. This variation is called the local energy of \( G \) at vertex \( v_j \) and is denoted by \( \mathscr{E}_G(v_j) \). Espinal and Rada \cite{esp} have shown that \( \mathscr{E}_G(v_j) \le 2\sqrt{d_j} \), where \( d_j \) is the degree of vertex \( v_j \). Moreover, \( \mathscr{E}_G(v_j) = 2\sqrt{d_j} \) if and only if the connected component of \( G \) containing \( v_j \) is isomorphic to a star tree with \( v_j \) as its center.

The local energy of graph \( G \) is defined in \cite{esp}  as
\[
e(G) = \sum_{v \in V(G)} \mathscr{E}_G(v).
\]
Espinal and Rada \cite{esp} have obtained the local energy of several known graph families as follows:
\begin{enumerate}
    \item $e(K_n) = 2n$
    \item $e(C_n) = \begin{cases} 
        2n - 2n \cot \dfrac{\pi}{2n} + 4n \cot \dfrac{\pi}{2n}, & \text{if } n \equiv 0 \pmod{4} \\[.3cm] 
        2n - 2n \cot \dfrac{\pi}{2n} + 4n \csc \dfrac{\pi}{2n}, & \text{if } n \equiv 2 \pmod{4} \\[.3cm] 
        2n, & \text{if } n \equiv 1 \pmod{2} 
    \end{cases}$
    \item $e(K_{p,q}) = 2(p+q) \sqrt{pq} - 2p \sqrt{(p-1)q} - 2q \sqrt{(q-1)p}$
\end{enumerate}
As it is seen that $K_n$ and $C_n$ have equal local energies for odd integer $n$. This observation raises a natural question: Are there other pairs of graphs with equal local energies?

In this paper, we introduce the concept of \textit{locally equienergetic graphs}, then we give locally equienergetic graphs under disjoint union operator. Furthermore, we present an exhaustive listing of all pairs of locally equienergetic graphs of order less than 11. Our findings reveal that the only pair of locally equienergetic simple, undirected, and connected graphs is ($K_n, C_n$) for odd $n<11$.

The paper is structured as follows. In Section \ref{sec:disjoint}, we examine the behavior of local energy under the disjoint union operation. Section \ref{sec:result} provides a comprehensive list of all locally equienergetic graphs of order less than 11. Finally, we conclude our study in Section \ref{sec:conc}.

\section{Local Energy of Disjoint Union}
\label{sec:disjoint}
The local energy of the disjoint union of two graphs is equal to the sum of the local energies of the two individual graphs.
\begin{theorem} \label{thm:u} Let $G$ and $H$ be two graphs then
$e(G\cup H)= e(G)+e(H)$. 
\end{theorem}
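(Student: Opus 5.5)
The plan is to reduce everything to the additivity of ordinary energy under disjoint union, and then split the sum defining $e(G\cup H)$ according to whether a vertex lies in $G$ or in $H$.

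\textbf{Additivity of energy.} First I will record that $\mathscr{E}(G_1\cup G_2)=\mathscr{E}(G_1)+\mathscr{E}(G_2)$ for any graphs $G_1,G_2$. Order the vertices of $G_1\cup G_2$ so that those of $G_1$ come first. Then $A(G_1\cup G_2)$ is the block diagonal matrix with blocks $A(G_1)$ and $A(G_2)$. Its characteristic polynomial is therefore the product of the two characteristic polynomials, so its spectrum is the multiset union of the two spectra. Summing absolute values gives the claim.

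\textbf{Local energy of a single vertex.} Next, take a vertex $v\in V(G)$. Deleting $v$ from $G\cup H$ gives the graph $(G-v)\cup H$. By additivity,
\[
\mathscr{E}_{G\cup H}(v)=\mathscr{E}(G)+\mathscr{E}(H)-\mathscr{E}(G-v)-\mathscr{E}(H)=\mathscr{E}_G(v).
\]
By symmetry, $\mathscr{E}_{G\cup H}(w)=\mathscr{E}_H(w)$ for every $w\in V(H)$.

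\textbf{Summing.} Since $V(G\cup H)$ is the disjoint union of $V(G)$ and $V(H)$, the sum defining $e(G\cup H)$ splits into the two pieces $\sum_{v\in V(G)}\mathscr{E}_G(v)+\sum_{w\in V(H)}\mathscr{E}_H(w)=e(G)+e(H)$.

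\textbf{Degenerate case.} If $G$ has only one vertex $v$, then $G-v$ is the empty graph, whose energy is $0$. The same computation still applies.

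There is no real obstacle. The only point needing care is the block-diagonal spectrum argument, and it is standard.
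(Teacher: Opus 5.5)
Your proof is correct and follows the paper's argument: you use the block-diagonal form of $A(G\cup H)$ to get additivity of energy, deduce $\mathscr{E}_{G\cup H}(v)=\mathscr{E}_G(v)$ for $v\in V(G)$ (and symmetrically for $H$), and split the vertex sum to obtain $e(G)+e(H)$. Your remark on the one-vertex case is a small addition; the paper only addresses this later, through the convention $e(K_1)=0$.
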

\begin{proof}
Let \( G \) and \( H \) be two graphs with adjacency matrices \( A(G) \) and \( A(H) \), respectively. The adjacency matrix of the disjoint union of \( G \) and \( H \), denoted \( G \cup H \), is given by

\[
A(G \cup H) = \begin{pmatrix} A(G) & 0 \\ 0 & A(H) \end{pmatrix}.
\]
The local energy of the graph $G \cup H$ is given by
\[
\begin{array}{lcl}
e(G \cup H) &=& \dd\sum_{v \in V(G \cup H)} \mathscr{E}_{G \cup H}(v)\\[.6cm]
&=& \dd\sum_{v \in V(G)} \mathscr{E}_{G \cup H}(v) + \sum_{v \in V(H)} \mathscr{E}_{G \cup H}(v)\\[.6cm]
&=& \dd\sum_{v \in V(G)}{\mathscr{E}({G \cup H})-\mathscr{E}({G^v \cup H})} + \sum_{v \in V(H)} {\mathscr{E}({G \cup H})-\mathscr{E}({G \cup H^v})}\\[.6cm]
&=& \dd\sum_{v \in V(G)}{\mathscr{E}(G) +\mathscr{E}(H)-\mathscr{E}(G^v) -\mathscr{E}(H)}  \\[.6cm]
& + & \sum_{v \in V(H)}{\mathscr{E}(G) +\mathscr{E}(H)-\mathscr{E}(G) -\mathscr{E}(H^v)} \\[0.6cm]
&=& \dd\sum_{v \in V(G)}{\mathscr{E}(G) -\mathscr{E}(G^v)} + \sum_{v \in V(H)}{\mathscr{E}(H) -\mathscr{E}(H^v)}\\[.6cm]
&=& e(G) + e(H).
\end{array}
\]
This completes the proof.
\end{proof}
\section{Locally Equienergetic Graphs} \label{sec:result} 
Two graphs \( G_1 \) and \( G_2 \) of the same order are defined as equienergetic if \( \mathscr{E}(G_1) = \mathscr{E}(G_2) \). More results related to equienergetic graphs are given in \cite{vai2,bran,mili,ram}.  Extending this concept, we introduce the notion of locally equienergetic graphs. Two graphs \( G_1 \) and \( G_2 \) of the same order are said to be locally equienergetic if \( e(G_1) = e(G_2) \). By applying the preceding theorem, we now construct locally equienergetic graphs composed of unions of complete graphs \( K_n \).
\begin{theorem} \label{thm:kn}
Let \( n \in \mathbb{Z}^+ \) and suppose \( n \) can be decomposed as \( n = n_1 + n_2 + \cdots + n_k \). Then, the local energy of the complete graph \( K_n \) is   
\[
e(K_n) = e(K_{n_1} \cup K_{n_2} \cup \cdots \cup K_{n_k}).
\]
\end{theorem}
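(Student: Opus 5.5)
We argue by induction on $k$, using Theorem \ref{thm:u} for the disjoint union and the formula $e(K_m)=2m$ of Espinal and Rada for the complete graph.

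First we confirm $e(K_m)=2m$ for every $m\ge 1$, so that no case is missing. The spectrum of $K_m$ is $m-1$ once and $-1$ with multiplicity $m-1$. Hence $\mathscr{E}(K_m)=2(m-1)$ for $m\ge 1$, and this also covers $m=1$, where $\mathscr{E}(K_1)=0$.

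Deleting any vertex of $K_m$ gives $K_{m-1}$. For $m\ge 2$ this gives
\[
\mathscr{E}_{K_m}(v)=2(m-1)-2(m-2)=2.
\]
For $m=1$ the deleted graph is the empty graph, whose energy we take to be $0$, so $\mathscr{E}_{K_1}(v)=0-0=0$. Summing over the $m$ vertices gives $e(K_m)=2m$ for $m\ge 2$ and $e(K_1)=0$.

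The case $m=1$ therefore needs care. Taking the formula $e(K_m)=2m$ literally at $m=1$ would be wrong. The stated identity can fail if some parts $n_i=1$ are allowed: for instance, $e(K_2)=4$ while $e(K_1\cup K_1)=0$. So we either restrict the statement to parts $n_i\ge 2$ (and $n\ge 2$), or adopt the convention that $e(K_1)=2$. We must be explicit about which choice we make.

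With $n_i\ge 2$, we apply Theorem \ref{thm:u} repeatedly. By induction on $k$,
\[
e\Big(\bigcup_{i=1}^k K_{n_i}\Big)=\sum_{i=1}^k e(K_{n_i}).
\]
The base case $k=1$ is trivial. For the inductive step, take $G=\bigcup_{i<k}K_{n_i}$ and $H=K_{n_k}$ in Theorem \ref{thm:u}. The sum then equals $\sum_{i=1}^k 2n_i=2n=e(K_n)$.

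The only real obstacle is the degenerate part $K_1$, as explained above. The rest is direct computation.
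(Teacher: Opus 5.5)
Your proof is correct and follows the paper's argument: repeated application of Theorem~\ref{thm:u} together with $e(K_m)=2m$ for each part. Your caveat about parts equal to $1$ points to a genuine defect in the paper's statement. Its proof silently uses $e(K_{n_i})=2n_i$ even when $n_i=1$, which contradicts its own later convention $e(K_1)=0$ and its listing of $K_5$ and $K_4\cup K_1$ in different classes, so restricting to $n_i\ge 2$ is the right fix; the alternative convention $e(K_1)=2$ is not a fix, because the definition still gives $e(K_1\cup K_1)=0$.
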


\begin{proof}
It is known that \( e(K_n) = 2n \) for the complete graph \( K_n \) with \( n \) vertices. Let \( n = n_1 + n_2 + \cdots + n_k \)  and consider the graph \( K_{n_1} \cup K_{n_2} \cup \cdots \cup K_{n_k} \), which represents the disjoint union of the complete graphs \( K_{n_1}, K_{n_2}, \ldots, K_{n_k} \). By the additivity property of local energy over disjoint unions, we have
\[
e(K_{n_1} \cup K_{n_2} \cup \cdots \cup K_{n_k}) = e(K_{n_1}) + e(K_{n_2}) + \cdots + e(K_{n_k}).
\]
Since \( e(K_{n_i}) = 2n_i \) for each \( i \), it follows that
\[
e(K_{n_1} \cup K_{n_2} \cup \cdots \cup K_{n_k}) = 2n_1 + 2n_2 + \cdots + 2n_k = 2(n_1 + n_2 + \cdots + n_k) = 2n.
\]
Therefore, we conclude that
\[
e(K_n) = e(K_{n_1} \cup K_{n_2} \cup \cdots \cup K_{n_k}),
\]
which completes the proof.
\end{proof}
To record the construction of the simplest locally equienergetic graphs, we present the following corollary.
\begin{corollary} \label{cor:kn}
For all \( k \in \mathbb{Z}^+ \) such that \( 2 \leq k \leq n - 2 \), we have
\[
e(K_n) = e(K_{n-k} \cup K_k).
\]
\end{corollary}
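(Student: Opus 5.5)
This is the special case $n = (n-k) + k$ of Theorem~\ref{thm:kn}, so I will specialise that theorem rather than argue from scratch. Fix $n$ and $k$ with $2 \le k \le n-2$, and set $n_1 = n-k$, $n_2 = k$. Both are positive integers and $n_1 + n_2 = n$, which is exactly the decomposition Theorem~\ref{thm:kn} requires with two parts. It then gives $e(K_n) = e(K_{n-k} \cup K_k)$ directly.

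For a self-contained check, I would repeat the computation through Theorem~\ref{thm:u}. Additivity over disjoint unions gives $e(K_{n-k} \cup K_k) = e(K_{n-k}) + e(K_k)$. Applying $e(K_m) = 2m$ from Espinal and Rada to each summand, this equals $2(n-k) + 2k = 2n = e(K_n)$.

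The only step needing care is whether $e(K_m) = 2m$ holds for the component sizes that actually occur. The bound $2 \le k \le n-2$ forces both $k \ge 2$ and $n-k \ge 2$, so neither component is trivial. For $K_2$ the formula is consistent: $\mathscr{E}(K_2) = 2$ and $\mathscr{E}(K_1) = 0$, so each vertex contributes $2$ and $e(K_2) = 4$. The restriction on $k$ is therefore not needed for the identity itself; it only excludes the degenerate unions $K_{n-1} \cup K_1$ and $K_0 \cup K_n$. Those unions contain an isolated vertex, whose local energy is $\mathscr{E}(K_1) - \mathscr{E}(K_0) = 0$, and there the formula $e(K_1) = 2$ would fail. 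Apart from verifying the base formula in this range, there is no real obstacle.
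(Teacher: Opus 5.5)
Your argument is correct and follows the paper's route: the corollary is the two-part case $n_1=n-k$, $n_2=k$ of Theorem~\ref{thm:kn}, which is itself just Theorem~\ref{thm:u} plus $e(K_m)=2m$ for $m\ge 2$. One correction to your last paragraph: the restriction on $k$ \emph{is} needed for the identity, since $e(K_{n-1}\cup K_1)=2n-2\neq 2n$ (your check that both parts are at least $2$ is exactly the hypothesis Theorem~\ref{thm:kn} leaves out but its proof uses), and $K_0\cup K_n\cong K_n$ has no isolated vertex and satisfies the identity trivially.
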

\begin{example} The following graphs of same order have same local energies:
\begin{enumerate}
    \item[a)] \( e(K_4) = e(K_2 \cup K_2) \)
    \item[b)] \( e(K_5) = e(K_3 \cup K_2) \)
    \item[c)] \( e(K_6) = e(K_4 \cup K_2) = e(K_3 \cup K_3) = e(K_2 \cup K_2 \cup K_2) \)
\end{enumerate}
\end{example}

Given that \( e(K_n) = 2n \) for all \( n \geq 2 \) and \( e(C_n) = 2n \) when \( n \) is odd, we conclude that \( K_n \) and \( C_n \) are locally equienergetic for odd \( n \). For convenience, we define the local energy of the trivial graph \( K_1 \) (a single isolated vertex) as zero, i.e., \( e(K_1) = 0 \). This definition implies that adding an isolated vertex to any graph \( G \) does not alter its local energy: 
\be \label{eqn:k1}
e(G \cup K_1) = e(G).
\ee

Based on an exhaustive computer search for all graphs of order \( \leq 10 \), we have identified the following sets of locally equienergetic graphs. Their proofs directly follow from Theorem \ref{thm:u}, Theorem \ref{thm:kn}, Corollary \ref{cor:kn} and Equation \eqref{eqn:k1}. Let  \( P_n \) denote the path graph of order $n$, and   \( S_n \) denote the star graph of order $n$.

\begin{proposition}
The only locally equienergetic graphs of order 4 are \( K_4 \) and \( K_2 \cup K_2 \), i.e., \( e(K_4) = e(K_2 \cup K_2) \).
\end{proposition}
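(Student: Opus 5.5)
The plan is a direct finite verification. There are exactly eleven graphs of order~4 up to isomorphism. Five are disconnected: $4K_1$, $K_2\cup 2K_1$, $K_2\cup K_2$, $P_3\cup K_1$ and $K_3\cup K_1$. Six are connected: $P_4$, $S_4=K_{1,3}$, $C_4$, the paw ($K_3$ with a pendant edge), the diamond ($K_4$ minus an edge) and $K_4$. I will compute $e(G)$ for each of them and check that the eleven values are pairwise distinct, with the single exception $e(K_4)=e(K_2\cup K_2)$.

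\textbf{Disconnected graphs.} For these I will use Theorem~\ref{thm:u} together with Equation~\eqref{eqn:k1}, which reduces each value to the local energy of a smaller graph. Since $e(K_2)=4$ and $e(K_3)=6$, this gives
\[
e(4K_1)=0,\quad e(K_2\cup 2K_1)=4,\quad e(K_2\cup K_2)=8,\quad e(K_3\cup K_1)=6 .
\]
For $P_3=S_3$ I compute directly. Removing the centre contributes $2\sqrt2$, and each of the two leaves contributes $2\sqrt2-2$. Hence $e(P_3\cup K_1)=6\sqrt2-4\approx 4.485$.

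\textbf{Connected graphs with known spectra.} Each vertex $v$ contributes $\mathscr{E}(G)-\mathscr{E}(G-v)$, where $G-v$ is a graph of order~3 whose energy is already known.
\begin{itemize}
\item $e(K_4)=8$, from the formula $e(K_n)=2n$.
\item $e(C_4)=4\bigl(4-2\sqrt2\bigr)=16-8\sqrt2\approx 4.686$.
\item $e(S_4)=2\sqrt3+3\bigl(2\sqrt3-2\sqrt2\bigr)=8\sqrt3-6\sqrt2\approx 5.371$.
\item $e(P_4)=4\cdot 2\sqrt5-2\cdot 2\sqrt2-2\cdot 2=8\sqrt5-4\sqrt2-4\approx 8.232$, using $\mathscr{E}(P_4)=2\sqrt5$.
\end{itemize}

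\textbf{Paw and diamond.} For these two graphs I will first write down the characteristic polynomials,
\[
\text{diamond: } x(x+1)(x^2-x-4), \qquad \text{paw: } x^4-4x^2-2x+1 .
\]
The diamond's eigenvalues are then explicit, $0,\,-1,\,\tfrac{1\pm\sqrt{17}}2$, so $\mathscr{E}=1+\sqrt{17}$. The paw's eigenvalues must be located numerically. The vertex-deleted subgraphs are all among $K_3$, $P_3$, $K_2\cup K_1$, so their energies are known. Summing the four vertex contributions in each case gives numerical values of $e$, and I will record them to a few decimals.

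\textbf{Main obstacle.} The only delicate step is certifying that no accidental equality occurs among the eleven numbers. The values involve different irrationalities ($\sqrt2$, $\sqrt3$, $\sqrt5$, $\sqrt{17}$, and roots of the paw's quartic). The simplest route is to compute every value to an accuracy of about $10^{-3}$ and observe that all gaps exceed this error, except between $K_4$ and $K_2\cup K_2$, which are both exactly~$8$. In particular I must check that $e(P_4)$, the diamond and the paw are bounded away from~$8$. For the paw I would bracket the roots of its quartic by sign changes to obtain rigorous bounds.

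With these values in hand, the only coincidence is $e(K_4)=8=e(K_2\cup K_2)$, which is Corollary~\ref{cor:kn} with $n=4$, $k=2$. This proves the proposition.
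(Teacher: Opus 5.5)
Your proposal is correct and is essentially the paper's own argument: the paper also rests on an exhaustive check of all graphs of order~4 (done there by computer search), with the equality $e(K_4)=e(K_2\cup K_2)$ supplied by Corollary~\ref{cor:kn}. Carrying out your deferred step gives $4\sqrt{17}-4\sqrt2-4\approx 6.836$ for the diamond and, using the factorization $x^4-4x^2-2x+1=(x+1)(x^3-x^2-3x+1)$, about $8.193$ for the paw; the tightest non-trivial gap is therefore paw versus $P_4$ (roughly $0.039$), well above your $10^{-3}$ tolerance.
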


\begin{proposition}
There are 2 classes of locally equienergetic graphs of order 5:
\begin{itemize}
    \item [(i)] \( e(K_5) = e(K_3 \cup K_2) = e(C_5) \)
    \item [(ii)] \( e(K_4 \cup K_1) = e(K_2 \cup K_2 \cup K_1) \)
\end{itemize}
\end{proposition}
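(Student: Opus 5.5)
The statement has two parts: the claimed equalities, and the claim that these are the only coincidences of local energy among the $34$ graphs of order $5$. I would handle them separately.

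For the equalities in class (i), Corollary \ref{cor:kn} with $n=5$, $k=2$ gives $e(K_5)=e(K_3\cup K_2)=10$. The value $e(C_5)=10$ follows from the odd case of the Espinal--Rada formula for $e(C_n)$. For class (ii), Theorem \ref{thm:u} together with Equation \eqref{eqn:k1} gives $e(K_4\cup K_1)=e(K_4)=8$. Likewise $e(K_2\cup K_2\cup K_1)=e(K_2)+e(K_2)=4+4=8$, using $e(K_2)=4$, which agrees with $e(K_n)=2n$. This part is routine.

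For exclusivity, I would enumerate all $34$ graphs on $5$ vertices, up to isomorphism, and compute each local energy exactly. Theorem \ref{thm:u} and Equation \eqref{eqn:k1} reduce every disconnected graph to its components, which have order at most $4$, so their local energies are already available from the order-$\le 4$ analysis behind the previous proposition. Only the $21$ connected graphs of order $5$ need new work. For each one I would compute $\mathscr{E}(G)$ and $\mathscr{E}(G-v)$ for one representative $v$ of each automorphism orbit, then weight each term by the orbit size. Several families have closed forms that shorten the work:
\begin{itemize}
\item stars, via the formula for $e(K_{p,q})$ with $S_5=K_{1,4}$;
\item $K_{2,3}$, via the same formula;
\item $P_5$, whose spectrum is $2\cos\frac{k\pi}{6}$;
\item $K_5$ minus a matching and similar complements, whose spectra are easy to obtain.
\end{itemize}

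The main obstacle is certifying that two computed values are genuinely distinct. The energies involve nested radicals and roots of irreducible characteristic polynomials, so equal-looking decimals do not settle anything. First I would compute all values numerically to high precision and sort them. Any pair that differs by more than the rounding error is distinct. Values that match numerically would be checked for exact equality. In practice these are the integer-valued ones, namely the graphs built from $K_n$'s and $C_5$. For the remaining candidates, I would show that their local energies are irrational, for instance by exhibiting an isolated $\sqrt{2}$ or $\sqrt{3}$ term, or a non-integral algebraic number. This rules out equality with $8$ or $10$, and I expect it to be the most laborious step.
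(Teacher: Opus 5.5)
Your proposal is correct and follows essentially the same route as the paper. The equalities come from Corollary~\ref{cor:kn}, Theorem~\ref{thm:u}, Equation~\eqref{eqn:k1} and the odd-cycle value $e(C_5)=10$, and exclusivity rests on an exhaustive check of all $34$ graphs of order $5$, which the paper leaves to a computer search and you organize more explicitly. The irrationality step you expect to be laborious is unnecessary, since every non-integer value lies more than $0.1$ from $8$ and $10$, and the closest distinct pair, $e(P_5)\approx 8.719$ and $e(\overline{P_3\cup K_2})\approx 8.726$, is still about $0.007$ apart, so certified numerics alone settle distinctness.
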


\begin{proposition}
There are 3 classes of locally equienergetic graphs of order 6:
\begin{itemize}
    \item [(i)] \( e(K_6) = e(K_4 \cup K_2) = e(K_3 \cup K_3) = e(K_2 \cup K_2 \cup K_2) \)
    \item [(ii)] \( e(K_5 \cup K_1) = e(K_3 \cup K_2 \cup K_1) = e(C_5 \cup K_1) \)
    \item [(iii)] \( e(K_4 \cup K_1 \cup K_1) = e(K_2 \cup K_2 \cup K_1 \cup K_1) \)
\end{itemize}
\end{proposition}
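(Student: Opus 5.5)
The statement has two parts. The first is that the three listed equalities hold. The second is that these are the only coincidences among the 156 graphs of order 6. I would treat the two parts separately.

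For the equalities, I would use only the results already available.

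For class (i), Theorem \ref{thm:kn} applied to the decompositions $6=4+2=3+3=2+2+2$ gives $e(K_6)=e(K_4\cup K_2)=e(K_3\cup K_3)=e(K_2\cup K_2\cup K_2)=12$. Alternatively, each case follows from Theorem \ref{thm:u} together with $e(K_m)=2m$.

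For class (ii), I would apply Equation \eqref{eqn:k1} to each order-5 graph. This reduces the claim to $e(K_5)=e(K_3\cup K_2)=e(C_5)=10$. The first equality is Corollary \ref{cor:kn}. The second uses the formula $e(C_n)=2n$ for odd $n$ quoted from Espinal and Rada.

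For class (iii), Equation \eqref{eqn:k1}, applied twice, together with Theorem \ref{thm:u}, reduces the claim to $e(K_4)=e(K_2\cup K_2)=8$.

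The three common values are 12, 10 and 8. Since they are distinct, the three classes really are different classes.

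The exhaustiveness claim is the real obstacle, because there is no structural argument for it in the paper. My plan is to follow the authors' computational route and make it checkable. I would enumerate all 156 nonisomorphic graphs on 6 vertices, for instance with nauty's \texttt{geng}. For each graph $G$ I would compute $\mathscr{E}(G)$ and $\mathscr{E}(G^{(j)})$ for every $j$, and hence $e(G)$, to high precision. I would then sort the values and group the graphs whose values agree within a tolerance such as $10^{-9}$. The statement predicts exactly three groups of size at least two, with the memberships listed above.

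To make the search robust, I would do two things. First, I would reduce the amount of computation needed. By Theorem \ref{thm:u} and Equation \eqref{eqn:k1}, $e$ depends only on the multiset of values $e$ of the nontrivial components. So the search reduces to connected graphs of order at most 6, and the values for disconnected graphs are obtained by summation. Second, I would certify that near-coincidences are genuine. The values $e(G)$ are finite sums of square roots of algebraic numbers, so any pair flagged as equal numerically but not covered by the listed classes would need exact verification, for example by computing characteristic polynomials symbolically. Conversely, the distinct groups should be separated by a margin well above the numerical error.
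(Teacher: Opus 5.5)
Your proposal is correct and follows the paper's own route. The three equalities come from Theorem~\ref{thm:u}, Theorem~\ref{thm:kn}, Corollary~\ref{cor:kn}, Equation~\eqref{eqn:k1} and $e(C_5)=10$, while the claim that there are no further classes rests, as in the paper, on an exhaustive computer search over the 156 graphs of order~6; your reduction to connected components and certified numerical separation make that search more checkable without changing the approach.
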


\begin{proposition}
There are 5 classes of locally equienergetic graphs of order 7:
\begin{itemize}
    \item [(i)] \( e(K_7) = e(K_5 \cup K_2) = e(K_4 \cup K_3) = e(K_3 \cup K_2 \cup K_2) = e(C_7) = e(C_5 \cup K_2) \)
    \item [(ii)] \( e(K_6 \cup K_1) = e(K_4 \cup K_2 \cup K_1) = e(K_3 \cup K_3 \cup K_1) = e(K_2 \cup K_2 \cup K_2 \cup K_1) \)
    \item [(iii)] \( e(K_5 \cup K_1 \cup K_1) = e(K_3 \cup K_2 \cup K_1 \cup K_1) = e(C_5 \cup K_1 \cup K_1) \)
    \item [(iv)] \( e(K_4 \cup K_1 \cup K_1 \cup K_1) = e(K_2 \cup K_2 \cup K_1 \cup K_1 \cup K_1) \)
    \item [(v)] \( e(K_4 \cup P_3) = e(K_2 \cup K_2 \cup P_3) \)
\end{itemize}
\end{proposition}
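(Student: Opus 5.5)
The proposition has two parts of different character. First, each displayed equality must be verified. Second, there is the claim that these five classes are \emph{all} the locally equienergetic pairs of order 7. I would treat them separately.

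For the equalities, the plan is to reduce everything to the values $e(K_m)=2m$ for $m\ge 2$, $e(K_1)=0$, $e(C_5)=10$ (odd cycle), and $e(C_7)=14$, and then apply Theorem~\ref{thm:u} repeatedly.

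\begin{itemize}
\item In class (i), each graph has local energy $14$. For example, $e(K_5\cup K_2)=10+4$, $e(K_4\cup K_3)=8+6$, $e(K_3\cup K_2\cup K_2)=6+4+4$, $e(C_7)=14$, and $e(C_5\cup K_2)=10+4$.
\item Classes (ii), (iii) and (iv) follow from Theorem~\ref{thm:kn} at orders $6$, $5$ and $4$, together with $e(C_5)=10$. One then appends isolated vertices using Equation~\eqref{eqn:k1}.
\item Class (v) is Corollary~\ref{cor:kn} with $e(K_4)=e(K_2\cup K_2)=8$, followed by Theorem~\ref{thm:u} applied to the common component $P_3$. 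Nothing about $e(P_3)$ needs to be known: it is the same term on both sides.
\end{itemize}

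The completeness claim is the real content and also the main obstacle, since it cannot be derived from the earlier theorems. I would establish it by exhaustive enumeration. There are $1044$ graphs on $7$ vertices up to isomorphism, which can be generated with nauty's \texttt{geng}. For each graph $G$, compute $\mathscr{E}(G)$ and $\mathscr{E}(G^{(j)})$ for all $j$ from adjacency spectra, and form $e(G)=7\mathscr{E}(G)-\sum_j \mathscr{E}(G^{(j)})$. Then sort the $1044$ values and cluster together those that agree within a tolerance such as $10^{-8}$. The claim is that exactly the five listed clusters have size greater than one.

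The delicate point is distinguishing genuine equality from numerical near-coincidence. Energies are sums of algebraic numbers, so two distinct values could in principle be extremely close. I would guard against this in two ways. First, I would confirm that every reported cluster is explained exactly by the symbolic argument above, so no false positive survives. Second, I would check that the smallest gap between distinct clusters is many orders of magnitude above machine precision, so that no true equality has been split apart. If any gap were borderline, I would recompute that pair in high-precision arithmetic, for instance with 50 digits. The same procedure also certifies the earlier propositions for orders $4$--$6$, which gives a useful cross-check, since class (v) is the first genuinely new phenomenon at order $7$.
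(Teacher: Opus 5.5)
Your proposal follows the paper's own route: the displayed equalities come, as the paper indicates, from Theorem~\ref{thm:u}, Theorem~\ref{thm:kn}, Corollary~\ref{cor:kn}, Equation~\eqref{eqn:k1} and $e(C_n)=2n$ for odd $n$, and the completeness of the list rests, as in the paper, on an exhaustive computer search over all graphs of order~$7$. Your safeguards against floating-point coincidences (symbolic confirmation of each cluster and a check that distinct values are well separated) add rigor the paper does not spell out, but they do not change the argument.
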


\begin{proposition}
There are 12 classes of locally equienergetic graphs of order 8:
\begin{itemize}
    \item [(i)] \( e(K_8) = e(K_6 \cup K_2) = e(K_5 \cup K_3) = e(2K_4) = e(2K_3 \cup K_2) = e(C_5 \cup K_3)  = e(4K_2) \)
    \item [(ii)] \( e(K_7 \cup K_1) = e(K_5 \cup K_2 \cup K_1) = e(K_4 \cup K_3 \cup K_1) = e(K_3 \cup 2K_2 \cup K_1) = e(C_7 \cup K_1) = e(C_5 \cup K_2 \cup K_1) \)
    \item [(iii)] \( e(K_6 \cup 2K_1) = e(K_4 \cup K_2 \cup 2K_1) = e(K_3 \cup K_3 \cup 2K_1) = e(3K_2 \cup 2K_1) \)
    \item [(iv)] \( e(K_5 \cup 3K_1) = e(K_3 \cup K_2 \cup 3K_1) = e(C_5 \cup 3K_1) \)
    \item [(v)] \( e(K_4 \cup 4K_1) = e(2K_2 \cup 4K_1) \)
    \item [(vi)] \( e(K_4 \cup P_3 \cup K_1) = e(2K_2 \cup P_3 \cup K_1) \)
    \item [(vii)] \( e(K_4 \cup C_4) = e(2K_2 \cup C_4) \)
    \item [(viii)] \( e(K_4 \cup C'_4) = e(2K_2 \cup C'_4) \), where $C'_4$ is the graph of 4-cycle with a diagonal.
    \item [(ix)] \( e(K_4 \cup S_4) = e(2K_2 \cup S_4) \)
    \item [(x)] \( e(K_4 \cup P_4) = e(2K_2 \cup P_4) \)
    \item [(xi)] \( e(K_4 \cup P'_4) = e(2K_2 \cup P'_4) \), where $P'_4$ is the graph with 3-cycle with a tail. 
    \item [(xii)] \( e(K_5 \cup P_3) = e(K_3 \cup K_2 \cup P_3) =e(C_5 \cup P_3) \)
\end{itemize}
\end{proposition}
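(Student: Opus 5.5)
The statement has two parts. First, the displayed equalities hold. Second, these twelve classes are exactly the nontrivial classes of locally equienergetic graphs of order $8$. I would treat them separately: the equalities by the algebraic tools already available, and the completeness claim by an exhaustive enumeration, as the paper indicates.

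\emph{The equalities.} In every class, each graph is a disjoint union $X\cup R$. The ``residue'' $R$ is one of $mK_1$, $P_3$, $P_3\cup K_1$, $C_4$, $C'_4$, $S_4$, $P_4$, $P'_4$. The ``core'' $X$ is a union of complete graphs, odd cycles $C_5$, $C_7$, and isolated vertices.

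By Theorem~\ref{thm:u} and \eqref{eqn:k1}, $e(X\cup R)=e(X)+e(R)$. So within a class it suffices to check that the cores have equal local energy.

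For the cores, use $e(K_m)=2m$ for $m\ge 2$ and $e(C_m)=2m$ for odd $m$. Then any core built from complete graphs of order $\ge 2$ and odd cycles, on $s$ non-isolated vertices, has $e=2s$ by Theorem~\ref{thm:u}. Theorem~\ref{thm:kn} and Corollary~\ref{cor:kn} cover the purely complete cases.

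For example, in (i) every listed graph has $8$ vertices covered by such components, so each has $e=16$. In (xii) the cores $K_5$, $K_3\cup K_2$ and $C_5$ all give $10$, and the common summand is $e(P_3)$. Each of the other items is checked the same way, which is routine bookkeeping.

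\emph{Completeness.} I would argue that no other coincidences occur among the $12346$ graphs of order $8$. For each graph $G$ I would compute $e(G)=8\,\mathscr{E}(G)-\sum_v \mathscr{E}(G-v)$ numerically from adjacency spectra, for example by generating all graphs with nauty \texttt{geng}. I would then sort the values and group them under a tolerance such as $10^{-8}$.

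This is the main obstacle: numerical equality is not proof of equality, and numerical separation needs a rigorous margin. To certify \emph{inequality}, I would use interval arithmetic on the eigenvalues. The minimal gap between distinct groups should be far above the floating-point error.

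To certify each detected \emph{equality}, I would check that it is one of the listed identities, hence proved exactly by the argument above. Any candidate coincidence not explained by those identities would have to be examined symbolically, through the exact algebraic numbers $\sqrt{\cdot}$ arising from the characteristic polynomials. The expectation from the search is that no such extra coincidence exists.

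Finally, I would confirm that the twelve classes listed are exactly the multi-element groups found by the search.
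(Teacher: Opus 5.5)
Your approach is the paper's own: the paper gives no argument beyond an exhaustive search, with the equalities attributed to Theorem~\ref{thm:u}, Theorem~\ref{thm:kn}, Corollary~\ref{cor:kn} and \eqref{eqn:k1}. Your interval-arithmetic and symbolic certification is a real gain in rigor over simply reporting search output. The problem is your last step, ``confirm that the twelve classes listed are exactly the multi-element groups found by the search.'' It cannot succeed, because (i) is not a complete class. The graph $K_4\cup 2K_2$ has order $8$, and Theorem~\ref{thm:kn} with $8=4+2+2$ gives $e(K_4\cup 2K_2)=8+4+4=16=e(K_8)$. Yet it appears in none of (i)--(xii). Your own remark that a core with $s$ non-isolated vertices has $e=2s$ exposes this immediately, so the ``routine bookkeeping'' was asserted rather than carried out. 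Your search would place $K_4\cup 2K_2$ in the group at value $16$, and the confirmation would fail. The omission is already in the paper's statement, and its proof does not catch it either.

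What your method actually proves is a corrected statement. There are still exactly $12$ classes, but (i) must also contain $K_4\cup 2K_2$, giving eight graphs: $K_8$, $K_6\cup K_2$, $K_5\cup K_3$, $C_5\cup K_3$, $2K_4$, $K_4\cup 2K_2$, $2K_3\cup K_2$, $4K_2$. To make the bookkeeping trustworthy, enumerate the explained coincidences systematically rather than checking the listed items one at a time.
\begin{itemize}
\item Core-only classes are indexed by $s\in\{4,\dots,8\}$. Each is a partition of $s$ into parts $\ge 2$, with a part $5$ or $7$ optionally realized as $C_5$ or $C_7$, padded by $(8-s)K_1$. This gives (i)--(v).
\item The residue $P_3$, combined with the two nontrivial core classes on $5$ vertices, gives (vi) and (xii).
\item The five connected non-complete graphs on four vertices ($C_4$, $C'_4$, $S_4$, $P_4$, $P'_4$), each paired with $\{K_4,2K_2\}$, give (vii)--(xi).
\end{itemize}
This reproduces the count $12$ and the full membership of every class. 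The computer search is then needed only to certify that no sporadic coincidence occurs beyond these.
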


A similar classification of locally equienergetic graphs exists for orders 9 and 10. Summarizing our exhaustive search, we establish the following:

\begin{corollary}
There exists no simple, undirected, and connected locally equienergetic graph of order \( n \leq 10 \) except the pair \( (K_n,C_n) \) for odd \( n \).
\end{corollary}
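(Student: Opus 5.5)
The plan is to treat this as a finite verification, split into a positive part and a negative part.

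\emph{Positive part.} For every odd $n \le 10$, i.e.\ $n \in \{3,5,7,9\}$, we need $e(K_n)=e(C_n)$. Both sides equal $2n$: the first by the formula $e(K_n)=2n$, the second by the Espinal--Rada formula $e(C_n)=2n$ for odd $n$. For $n=3$ the pair degenerates because $C_3 = K_3$. So the nontrivial pairs are $(K_5,C_5)$, $(K_7,C_7)$ and $(K_9,C_9)$.

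\emph{Negative part.} We must show that no other pair of non-isomorphic connected graphs of the same order $n\le 10$ has equal local energy. The approach is an exhaustive computation.
\begin{enumerate}
\item Enumerate all connected graphs on $n\le 10$ vertices up to isomorphism, for instance with \texttt{geng -c}. This gives about $1.2\times 10^7$ graphs for $n=10$.
\item For each graph $G$, compute the spectrum of $A(G)$ and of the $n$ vertex-deleted subgraphs $G^{(j)}$. Form $e(G)=n\,\mathscr{E}(G)-\sum_j \mathscr{E}(G^{(j)})$.
\item Since the $G^{(j)}$ may be disconnected, use Theorem~\ref{thm:u} and Equation~\eqref{eqn:k1} to keep the bookkeeping consistent; the energies themselves are computed directly.
\item Sort the values for each fixed $n$ and inspect all groups of graphs whose values agree within a tolerance such as $10^{-8}$.
\end{enumerate}

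The main obstacle is certifying that near-coincidences flagged numerically are genuine. False negatives are not a concern, since floating error far below $10^{-8}$ can be guaranteed with double-precision symmetric eigensolvers on $10\times10$ matrices.

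For each candidate pair, I would first recompute in high precision (e.g.\ 50 digits) to separate values that merely agree to eight digits. For pairs that still agree, the expected survivors are exactly $(K_n,C_n)$, and for those equality is proved exactly by the closed forms above. If any other pair survived high precision, I would try to prove or refute equality exactly. The tool would be algebraic: compute the characteristic polynomials of $G$ and of each $G^{(j)}$, then express the energies via the roots. Equality of sums of absolute values of algebraic numbers can be decided in a common number field, or by checking whether their difference is a nonzero algebraic number with a computable separation bound. The bulk of the work is this computation; the conceptual content reduces to the known closed forms for $K_n$ and $C_n$.
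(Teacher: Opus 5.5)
Your positive part is correct, and your pipeline is the same as the paper's, whose proof consists only of an unreported exhaustive search. Your certification layer (flag in double precision, confirm in high precision or exact algebra) is more careful than anything the paper gives. The appeal to Theorem~\ref{thm:u} for ``bookkeeping'' is unnecessary: $\mathscr{E}(G^{(j)})$ is computed from the spectrum whether or not $G^{(j)}$ is connected.

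The genuine gap is the step you leave as an expectation, that the only surviving pairs are $(K_n,C_n)$. Under the paper's definition, which does not exclude cospectral pairs, this is false at $n=10$, and your own fallback branch would prove an equality that refutes the statement. Here is a counterexample.
\begin{itemize}
\item Take $D=\{1,2,3,4\}$ with edges $13$ and $24$. For each $2$-subset $p\subset D$ add a vertex $x_p$ adjacent to both vertices of $p$. Join the six new vertices in the cycle $x_{12}x_{13}x_{14}x_{34}x_{23}x_{24}x_{12}$. Call this connected $4$-regular graph on $10$ vertices $G$.
\item Let $H$ be obtained from $G$ by reattaching every $x_p$ to $D\setminus p$ instead (Godsil--McKay switching on $D$).
\item In $G$ the edge $1x_{13}$ lies in three triangles (common neighbours $3,x_{12},x_{14}$). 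In $H$ no edge lies in more than two triangles, so $G\not\cong H$.
\end{itemize}

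Now take $Q=\mathrm{diag}(\tfrac12 J_4-I_4,\,I_6)$. Then $A(H)=QA(G)Q$, so $G$ and $H$ are cospectral. For $v\notin D$, the graph $H-v$ is the same switching applied to $G-v$, hence cospectral with it. For $v\in D$, write $R=(\lambda I-A(G))^{-1}$; then
\[
\frac{\phi(H-v,\lambda)}{\phi(H,\lambda)}=(QRQ)_{vv}=R_{vv}-(R\mathbf{1}_D)_v+\tfrac14\,\mathbf{1}_D^{T}R\,\mathbf{1}_D=R_{vv}=\frac{\phi(G-v,\lambda)}{\phi(G,\lambda)}.
\]
The last two terms cancel because $\{D,V\setminus D\}$ is an equitable partition of $G$, with quotient matrix $\left(\begin{smallmatrix}1&3\\2&2\end{smallmatrix}\right)$. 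Hence $(R\mathbf{1}_D)_v$ is the same for every $v\in D$. So $G-v$ and $H-v$ are cospectral for every vertex $v$, and $e(G)=e(H)$ exactly.

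No amount of numerical care can therefore deliver the corollary as worded, and the paper's search claim is in error here too. A defensible version must at least exclude cospectral pairs, as is customary for equienergetic graphs. Your search would then need an explicit cospectrality filter before declaring the survivors.
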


This observation leads to the following conjecture:

\begin{conjecture}
There exists no simple, undirected, and connected locally equienergetic graph other than the pair of locally equienergetic graphs \( (K_n, C_n) \) for odd \( n \).
\end{conjecture}

\section{Concluding Remarks} \label{sec:conc}
In conclusion, this paper has explored the structure and properties of locally equienergetic graphs, providing significant insights into the concept of local energy in graph theory. The introduced concept of locally equienergetic graphs has potential implications for the analysis of graph robustness, particularly in contexts where the impact of vertex removal is relevant, such as network resilience and vulnerability assessments.

\singlespacing

\end{document}